\DeclareMathOperator{\depth}{depth}
\DeclareMathOperator{\diag}{diag}
\DeclareMathOperator{\End}{End}
\DeclareMathOperator{\id}{id}
\DeclareMathOperator{\Mat}{Mat}
\DeclareMathOperator{\tr}{tr}
\renewcommand{\phi}{\varphi}
\renewcommand{\bar}{\overline}
\renewcommand{\tilde}{\widetilde}
\renewcommand{\leq}{\leqslant}
\renewcommand{\geq}{\geqslant}
\renewcommand{\to}{\longrightarrow}
\newcommand{\m}{{\mathfrak{m}}}
\newcommand{\PP}{{\mathbb P}}
\newcommand{\calc}{{\mathcal C}}
\newcommand{\cale}{{\mathcal E}}
\newtheorem{theorem}{Theorem}
\newtheorem{prop}[theorem]{Proposition}
\newtheorem{lemma}[theorem]{Lemma}
\newtheorem*{theorem*}{Theorem}
\newtheorem*{prop*}{Proposition}
\newtheorem*{maintheorem}{Main Theorem}
\theoremstyle{definition}
\newtheorem{definition}[theorem]{Definition}
\newtheorem{remark}[theorem]{Remark}
\newtheorem{notation}[theorem]{Notation}
\newtheorem{example}[theorem]{Example}
\numberwithin{equation}{theorem}
\begin{document}
\begin{frontmatter}
  \long\def\symbolfootnote[#1]#2{\begingroup%
    \def\thefootnote{\fnsymbol{footnote}}\footnote[#1]{#2}\endgroup}
  \title{Wild hypersurfaces\symbolfootnote[0]{To Roger
      Wiegand, our advisor, on the occasion of his $N^\text{th}$
      ascent of the Diamond.}}

\author[su]{Andrew Crabbe}
\ead{andrew.m.crabbe@gmail.com}

\author[su]{Graham J. Leuschke\corref{cor1}\fnref{fn1}}
\ead{gjleusch@math.syr.edu}
\ead[url]{http://www.leuschke.org/}

\cortext[cor1]{Corresponding Author}
\fntext[fn1]{GJL was partly supported by NSF grant DMS-0902119.}

\address[su]{Department of Mathematics, Syracuse University,
Syracuse NY 13244, USA}



\date{\today}

\begin{keyword}
maximal Cohen--Macaulay module \sep wild representation type \sep 
hypersurface ring
\MSC[2010]{
  16G50 \sep 
  16G60 \sep 
  13C14 \sep 
  13H10 \sep 
  16G10 
}
\end{keyword}

\begin{abstract}
Complete hypersurfaces of dimension at least $2$ and multiplicity at
least $4$ have wild Cohen-Macaulay type.   
\end{abstract}
\end{frontmatter}

\section*{Introduction}\label{sect:intro}

Let $R$ be a (commutative, Noetherian) local ring.  A finitely
generated $R$-module $M$ is called \emph{maximal Cohen--Macaulay}
(MCM) provided $\depth M =  \dim R$.  In particular, $R$ is a
Cohen--Macaulay (CM) ring if it is MCM as a module over itself.

This paper is about \emph{CM representation types,} specifically tame
and wild CM types.  See \S\ref{sect:tamewild} for the definitions of
these properties.  In this Introduction, we motivate our main result
by recalling the classification of complete equicharacteristic
hypersurface rings of \emph{finite CM type.}

\begin{theorem*}
  [{\cite{Knorrer, BGS}}] Let $k$ be an algebraically closed field of
  characteristic not equal to $2$, $3$, or $5$.  Let $d \geq 1$, let
  $f \in k[\![x_0, \dots, x_d]\!]$ be a non-zero non-unit power
  series, and let $R = k[\![x_0, \dots, x_d]\!]/(f)$ be the
  corresponding hypersurface ring.  Then there are only finitely many
  isomorphism classes of indecomposable MCM $R$-modules if, and only
  if, we have an isomorphism $R \cong k[\![x_0, \dots,
  x_d]\!]/(g(x_0,x_1) + x_2^2 + \cdots + x_d^2)$, where $g(x_0,x_1)$
  is one of the following polynomials, indexed by the ADE
  Coxeter--Dynkin diagrams:
  \begin{equation*}\label{eq:ADE}
    \begin{split}
      (A_n):& \qquad x_0^2 + x_1^{n+1}\text{, \quad some }n \geq 1\,;\\
      (D_n):& \qquad x_0^2x_1 +x_1^{n-1}\text{, \quad some }n \geq 4\,;\\
      (E_6):& \qquad x_0^3 + x_1^4\,;\\
      (E_7):& \qquad x_0^3 + x_0 x_1^3\,;\\
      (E_8):& \qquad x_0^3 + x_1^5\,.
    \end{split}
  \end{equation*}
\end{theorem*}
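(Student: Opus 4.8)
The plan is to prove the two implications separately. The forward direction --- that each of $A_n, D_n, E_6, E_7, E_8$ has finite CM type --- amounts, after some standard machinery, to writing down an explicit finite list of indecomposable matrix factorizations; the reverse direction --- that finite CM type forces one of these forms --- carries essentially all of the content, and I would prove it by reducing, via the splitting lemma and Kn\"orrer periodicity, to hypersurface singularities of dimension $\leq 2$, where the possibilities are pinned down by the classification of curve singularities of finite CM type and by the description of two-dimensional complete local rings of finite CM type as quotient singularities. The governing dichotomy is that a hypersurface has finite CM type exactly when it is a \emph{simple} singularity in Arnol'd's sense, and that the simple hypersurface singularities are precisely the ADE forms; the excluded characteristics $2, 3, 5$ enter only in the normal-form reductions ($2$ in the splitting lemma and in replacing $u^2 + v^2$ by $uv$, and $3$ and $5$ in the Tschirnhaus-type reductions used to recognize the $E$-singularities).

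For the reverse direction, suppose $R = k[\![x_0, \dots, x_d]\!]/(f)$ has finite CM type. First, $R$ has an \emph{isolated} singularity --- by Auslander's theorem, or concretely because a hypersurface with positive-dimensional singular locus carries a one-parameter family of pairwise non-isomorphic indecomposable MCM modules --- so the Jacobian ideal $(\partial f/\partial x_0, \dots, \partial f/\partial x_d)$ is $\m$-primary. Since $\operatorname{char} k \neq 2$, the splitting lemma (the formal Morse lemma with parameters) supplies a coordinate change after which
\[
  f = g(x_0, \dots, x_{c-1}) + x_c^2 + \cdots + x_d^2, \qquad g \in \m^3,
\]
where $c = \operatorname{corank}(f)$ is the corank of the Hessian of $f$; the germ defined by $g$ is again isolated, so $g$ is reduced. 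Now Kn\"orrer periodicity --- the stable equivalence $\underline{\mathrm{MCM}}(S/(h)) \simeq \underline{\mathrm{MCM}}(S[\![u,v]\!]/(h + u^2 + v^2))$ for a regular local ring $S$ --- together with Kn\"orrer's results relating the MCM theory of a hypersurface to that of its double branched cover, shows that finiteness of CM type for $R$ depends only on $g$ and on the parity of $d - c$: when $c = 2$ it is equivalent to finiteness of CM type for the plane curve $k[\![x_0, x_1]\!]/(g)$ (up to passing to a double branched cover), when $c \geq 3$ to that of a hypersurface singularity of dimension $c - 1$ or $c$ built from $g$, and it is automatic when $c \leq 1$. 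It remains to analyze $c = 2$ and to rule out $c \geq 3$.

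When $c \leq 1$ the reduced ring is regular or of multiplicity $2$, giving the $A_n$ series. When $c = 2$, the plane curve $k[\![x_0, x_1]\!]/(g)$ has finite CM type, and the classification of reduced curve singularities of finite CM type (Drozd--Roiter in general, Greuel--Kn\"orrer in the plane case) forces $\operatorname{mult}(g) \leq 3$: a germ of multiplicity $\geq 4$ --- the one-dimensional counterpart of the situation treated in the present paper --- carries an explicit one-parameter family of indecomposable torsion-free modules. A finite inspection of the order-$2$ and order-$3$ normal forms of a plane-curve germ --- the binary quadratic or cubic leading form, normalized by Weierstrass preparation and a Tschirnhaus substitution --- then leaves $A_n$ in multiplicity $2$ and $D_n, E_6, E_7, E_8$ in multiplicity $3$; every remaining multiplicity-$3$ germ is not simple --- it begins one of Arnol'd's parabolic or exceptional series, the smallest cases being $x_0^3 + x_1^6$ and $x_0^3 + x_0 x_1^4$ --- and admits an infinite family of indecomposable torsion-free modules, so is excluded. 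When $c \geq 3$, one descends --- via suitably general linear sections, which preserve the order-$3$ leading form and isolatedness, together with the double-branched-cover comparison --- to a surface hypersurface singularity of multiplicity $\geq 3$; this is not a rational double point (du Val singularities have multiplicity $2$), hence not a quotient singularity, hence not of finite CM type, by the description of two-dimensional complete local rings of finite CM type via the McKay correspondence, the infinitely many indecomposable MCM modules being visible, for instance, as the line bundles of the elliptic curve when the surface is the cone over a smooth plane cubic. That infinite CM type survives these descents, and so propagates back up to $R$, is one of the delicate points; granting it, $c \geq 3$ cannot occur.

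For the forward direction one exhibits, for each form on the list, all indecomposable matrix factorizations of the defining polynomial; by Eisenbud's correspondence these are the indecomposable MCM modules, and only finitely many occur. It suffices to handle $d = 1$ and $d = 2$ directly --- for $d = 2$ one may instead recognize the forms as the Kleinian quotients $k[\![u,v]\!]^G$ and quote the McKay correspondence, which matches the indecomposable MCM modules with the finitely many irreducible representations of $G$ --- and then propagate to all $d$ by Kn\"orrer periodicity (and the double branched cover). The main obstacle --- the analogue, in low dimensions and across all multiplicities, of the present paper's main theorem --- is the explicit production of the infinite families of indecomposable modules over the non-simple germs met above (the multiplicity-$\geq 4$ plane curves, the non-simple multiplicity-$3$ plane curves, and the multiplicity-$\geq 3$ surface singularities), together with the comparison tools --- Kn\"orrer periodicity, the double branched cover, and the behavior of CM type under general linear sections --- needed to make those families visible on $R$ in every dimension.
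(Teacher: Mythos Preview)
The paper does not prove this theorem; it is quoted from \cite{Knorrer, BGS} as motivation, and the only part of its proof the paper discusses is the ``key step'' \cite[Prop.~3.1]{BGS}: if $d \geq 2$ and $f \in \m^3$, then $R$ admits a family of indecomposable MCM modules parametrized by the points of a cubic hypersurface in $\PP_k^d$, hence has infinite CM type. Your outline is a reasonable sketch of the full argument, and its architecture --- isolated singularity, splitting lemma, Kn\"orrer periodicity, then a case analysis on the corank $c$ --- matches the original sources.

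Where your sketch departs from what the paper highlights is exactly at this key step. For $c \geq 3$ you propose to descend via general linear sections to a surface singularity of multiplicity $\geq 3$, observe that it is not a rational double point, and invoke the two-dimensional classification via quotient singularities. You yourself flag the descent as ``delicate,'' and it is: that finite CM type passes to a general hyperplane section is not automatic and requires its own argument. The BGS route avoids this entirely. Once the splitting lemma produces $g \in \m^3$ in $c \geq 3$ variables, the hypersurface $k[\![x_0,\dots,x_{c-1}]\!]/(g)$ already has dimension $c-1 \geq 2$ and multiplicity $\geq 3$, and \cite[Prop.~3.1]{BGS} constructs the infinite family there directly, by writing down explicit matrix factorizations depending on a point of the projectivized tangent space --- no further descent is required. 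Your route can be completed, but it trades a single direct construction for a chain of comparison lemmas; the paper's own Main Theorem is, in spirit, a sharpening of precisely that direct construction (from ``infinite'' to ``wild'' CM type, under the stronger hypothesis $f \in \m^4$).
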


A \emph{key step} in the proof of this theorem
is~\cite[Prop. 3.1]{BGS}, which says that if $d \geq 2$ and the
multiplicity $\operatorname{e}(R)$ is at least $3$ (equivalently $f
\in (x_0, \dots, x_d)^3$) then $R$ has a family of indecomposable MCM
modules parametrized by the points of a cubic hypersurface in
$\PP_k^{d}$.

One would like a classification theorem like the one above for, say,
hypersurfaces of tame CM type.  (Again, see \S\ref{sect:tamewild} for
definitions.)  Drozd and Greuel have shown~\cite{Drozd-Greuel:1993}
that the one-dimensional hypersurfaces defined in $k[\![x_0,x_1]\!]$
by 
\[
(T_{pq})\qquad x_0^p + x_1^q + \lambda x_0^2 x_1^2\,,
\]
with $p, q \geq 2$, $\lambda \in k \setminus \{0,1\}$, and $k$ an
algebraically closed field of characteristic not equal to $2$, have
tame CM type.  (With the exception of the cases $(p,q) = (4,4)$ and
$(3,6)$, one may assume $\lambda =1$.)  In fact, they show that a
curve singularity of infinite CM type has tame CM type if and only if
it birationally dominates one of these hypersurfaces.  More recently,
Drozd, Greuel, and Kashuba~\cite{Drozd-Greuel-Kashuba:2003} have shown
that the two-dimensional analogues
\[
(T_{pqr})\qquad x_0^p + x_1^q + x_2^r + x_0x_1x_2
\]
with $\frac1p + \frac 1q + \frac 1r \leq 1$ have tame CM type.  Since
these hypersurface rings have multiplicity $3$ in general, the desired
\emph{key step} in a classification of hypersurface rings of tame CM
type would have to be of the form ``If $d \geq 2$ and
$\operatorname{e}(R) \geq 4$, then $R$ has wild CM type.''  This
result is indeed true for $d=2$, as proved by
Bondarenko~\cite{Bondarenko:2007}.  

In working through Bondarenko's proof, we found a way to simplify the
argument somewhat; this simplification allows us to prove the desired
\emph{key step} for all $d \geq 2$.  Thus we prove
(Theorem~\ref{thm:main}) 

\begin{maintheorem}
  Let $S = k[\![x_0,\ldots,x_d]\!]$ with $d \geq 2$ and $f$ a non-zero
  power series of order at least $4$.  Then $R = S/(f)$ has wild
  Cohen-Macaulay type.
\end{maintheorem}

By the original \emph{key step} of~\cite{BGS}, the case $d \geq 3$ is
already known to admit at least a $\PP^2$ of indecomposable MCM
modules, so is already perhaps known by experts to have wild type.
Not being aware of an explicit statement to that effect, we think that
a unified statement is desirable.

In \S\ref{sect:tamewild} we give a brief survey of tame and wild
representation types for the commutative-algebraist reader, including
Drozd's proof of the essential fact that $k[a_1, \dots, a_n]$ is
finite-length wild for $n\geq2$, and in \S\ref{sect:main} we prove the
Main Theorem.

We are grateful to the anonymous referee, whose careful reading
improved the paper.

\section{Tameness and Wildness}\label{sect:tamewild}

There are several minor variations on the notions of tame and wild
representation type, but the intent is always the same: tame
representation type allows the possibility of a classification theorem
in the style of Jordan canonical form, while for wild type any
classification theorem at all is utterly out of reach.  The
definitions we will use are essentially those of
Drozd~\cite{Drozd:1977}; they seem to have appeared implicitly first
in~\cite{Donovan-Freislich:1973}.  They make precise the intent
mentioned above by invoking the classical unsolved problem of
canonical forms for $n$-tuples of matrices up to simultaneous
similarity~\cite{Gelfand-Ponomarev:1969} (see
Example~\ref{eg:twovars} below).

\begin{definition}
  Let $k$ be an infinite field, $R$ a local $k$-algebra, and let
  $\calc$ be a full subcategory of the finitely generated $R$-modules.
  \begin{enumerate}[\quad(i)]
  \item We say that $\calc$ is \emph{tame,} or \emph{of tame
      representation type,} if there is one discrete parameter $r$
    (such as $k$-dimension or $R$-rank) parametrizing the
    modules in $\calc$, such that, for each $r$, the indecomposables
    in $\calc$ form finitely many one-parameter families and finitely
    many exceptions.  Here a \emph{one-parameter family} is a set of
    $R$-modules $\{E/(t-\lambda)E\}_{\lambda \in k}$, where $E$ is a
    fixed $k[t]$-$R$-bimodule which is finitely generated and free over
    $k[t]$.

  \item We say that $\calc$ is \emph{wild,} or \emph{of wild
      representation type,} if for every finite-dimensional
    $k$-algebra $\Lambda$ (not necessarily commutative!), there exists
    a \emph{representation embedding} $\cale \colon
    \operatorname{mod}\Lambda \to \calc$, that is, $\cale$ is an exact
    functor preserving non-isomorphism and indecomposability.
  \end{enumerate}

We are mostly interested in two particular candidates for $\calc$.
When $\calc$ consists of the full subcategory of $R$-modules of finite
length, then we say $R$ is \emph{finite-length tame} or
\emph{finite-length wild}.  At the other extreme, when $\calc$ is the
full subcategory $\operatorname{MCM}(R)$ of maximal Cohen--Macaulay
$R$-modules, we say $R$ has \emph{tame} or \emph{wild CM type.}
\end{definition}

The following Dichotomy Theorem justifies the slight unwieldiness of
the definitions.  (See also~\cite{Klingler-Levy:2006} for a more
general statement.)

\begin{theorem}
  [Drozd~\cite{Drozd:1977,Drozd:1979}, Crawley-Boevey~\cite{CrawleyBoevey:1988}]
  A finite-dimensional algebra over an algebraically closed field is
  either finite-length tame or finite-length wild, and not both.
\end{theorem}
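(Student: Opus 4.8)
Because this is a deep theorem, I would only sketch the architecture of the proof, following Drozd's strategy in the streamlined form due to Crawley--Boevey. There are two assertions: that every finite-dimensional $k$-algebra $\Lambda$ (with $k$ algebraically closed) is tame or wild, and that the two possibilities exclude one another. I would treat the exclusion first, since it is the softer half, and then the dichotomy proper via reduction of matrix problems.

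\emph{Not both.} Suppose $\Lambda$ were simultaneously tame and wild. Let $\Gamma$ be the path algebra of the quiver with two vertices and three arrows $1\to 2$; this is a finite-dimensional wild algebra, and it carries a genuine two-parameter family of indecomposables, namely the representations of dimension vector $(1,1)$, whose isomorphism classes are parametrized by $\PP^2(k)$ and which all have $k$-dimension $2$. Wildness of $\Lambda$ yields a representation embedding $\cale\colon\mod\Gamma\to\mod\Lambda$; possibly after replacing it by one of the form $-\otimes_\Gamma B$ with $B$ a $\Gamma$-$\Lambda$-bimodule that is finitely generated and free, of some rank $m$, over $\Gamma$ (a standard reduction), the functor $\cale$ multiplies $k$-dimension by $m$ and carries algebraic families of pairwise non-isomorphic modules to algebraic families of pairwise non-isomorphic modules. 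Hence $\Lambda$ would acquire a two-parameter family of indecomposable modules all of $k$-dimension $2m$. But tameness says the $2m$-dimensional indecomposables lie in finitely many one-parameter families together with finitely many exceptions, and such a set cannot contain a two-dimensional family — contradiction.

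\emph{The dichotomy.} The plan is to transport the classification problem for $\mod\Lambda$ to a ``matrix problem'' and then run a reduction algorithm. From a minimal projective presentation one builds, following Drozd, a free triangular bocs $\mathfrak{A}$ (a differential biquiver) together with a fully faithful exact functor $\mod\Lambda\into\mathrm{Rep}(\mathfrak{A})$ whose image is large enough that $\Lambda$ is tame, respectively wild, if and only if $\mathfrak{A}$ is. This reduces the theorem to the dichotomy for free triangular bocses, which I would prove by the Kleiner--Roiter--Ovsienko reduction calculus: one repeatedly applies elementary reductions — deletion of superfluous arrows, regularization, edge and loop reductions, and unravelling at a vertex (here algebraic closedness of $k$ enters, to split matrices via their eigenvalues) — each of which replaces $\mathfrak{A}$ by a finite disjoint union of strictly smaller free triangular bocses while keeping exact track of dimension vectors. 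One shows the process terminates, and that at termination either $\mathfrak{A}$ has been displayed as a finite disjoint union of ``minimal'' bocses each of which is representation-finite or of tame type — of the Kronecker / $\widetilde{\mathbb{A}}_n$ flavour, whose indecomposables are classified up to Jordan canonical form, giving tameness of $\Lambda$ — or else at some stage a reduced bocs is seen to be wild, admitting a strict representation embedding from $\mod k\langle x,y\rangle$ (equivalently, containing the two-loop wild bocs), which pulls back to give wildness of $\Lambda$.

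\emph{Main obstacle.} Essentially all the difficulty lies in the middle of the second part. First, one must set up the bocs $\mathfrak{A}$ attached to $\Lambda$ and the embedding $\mod\Lambda\into\mathrm{Rep}(\mathfrak{A})$ carefully enough that tameness and wildness really do transfer across it — in particular that one-parameter families and representation embeddings correspond on the two sides. Second, the reduction calculus itself is delicate: one must define each reduction precisely, verify it preserves the class of free triangular bocses, pin down a numerical invariant that strictly decreases so as to force termination, and — hardest of all — analyze the terminal minimal bocses finely enough to produce an \emph{actual} one-parameter classification in the non-wild case rather than merely the absence of an obvious wildness obstruction. Tracking how dimension vectors and parameter counts evolve through the reductions, together with the combinatorial case analysis of minimal bocses, is the real content; this is Drozd's theorem, and for the write-up I would follow Crawley-Boevey's treatment via bocses.
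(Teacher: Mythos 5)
This theorem is quoted in the paper purely as background: it is attributed to Drozd and Crawley--Boevey and no proof is given (indeed the paper never uses it later -- only the definition of wildness, Example~\ref{eg:twovars}, Example~\ref{eg:Drozdring}, and Proposition~\ref{prop:polyring} enter the proof of the Main Theorem). So there is no proof in the paper to measure your text against; what you have written is an outline of the argument in the cited sources themselves, and as an outline of that route (minimal projective presentations producing a free triangular bocs, a functor $\mod\Lambda\to\mathrm{Rep}(\mathfrak{A})$ across which tameness and wildness transfer, the Roiter--Kleiner--Ovsienko reduction calculus with termination, and the case analysis of minimal bocses) it is accurate. But it is an outline, not a proof: every step that carries the actual difficulty is named rather than carried out, as you yourself say.

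One concrete caution about the half you do argue in detail. In the ``not both'' paragraph the final contradiction is not a matter of counting: over an infinite field, $\PP^2(k)$ and a finite union of lines $\{E_i/(t-\lambda)E_i\}_{\lambda\in k}$ together with a finite set have the same cardinality, so the statement that every member of your two-parameter family is isomorphic to some $E_i/(t-\lambda)E_i$ is not by itself absurd. To make this rigorous you need the geometric formulation -- for instance the ``number of parameters'' of the constructible family of indecomposables of fixed dimension, or an argument that each map $\lambda\mapsto E_i/(t-\lambda)E_i$ hits any isomorphism class only finitely often and has at most one-dimensional constructible image in the module variety, so that finitely many such curves cannot dominate a two-dimensional family of pairwise non-isomorphic indecomposables. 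This is precisely where the cited treatments of Drozd and Crawley--Boevey do genuine work, so a write-up should follow their parameter-counting formulation rather than the cardinality-flavoured phrasing above; the same care is needed when you assert that the representation embedding ``carries algebraic families to algebraic families,'' which requires the standard reduction to a bimodule functor that you only gesture at.
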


In this paper we will be most concerned with wildness.  It follows
immediately from the definition that, to establish that a given module
category $\calc$ is wild, it suffices to find a single particular
example of a wild $\calc_0$ and a representation embedding $\calc_0
\to \calc$.  To illustrate this idea, as well as for our own use in the
proof of the Main Theorem, we give here a couple of examples.

\begin{example}
  [{\cite{Gelfand-Ponomarev:1969}}]  
  \label{eg:twovars}
  The non-commutative polynomial ring $k\langle a,b\rangle$ over an
  infinite field $k$ is finite-length wild.  To see this, let $\Lambda
  = k\langle x_1, \dots, x_m\rangle/I$ be an arbitrary
  finite-dimensional $k$-algebra and let $V$ be a $\Lambda$-module of
  finite $k$-dimension $n$.  Represent the actions of the variables
  $x_1, \dots, x_m$ on $V$ by linear operators $X_1, \dots, X_m
  \in \End_k(V)$.  For $m$ distinct scalars $c_1, \dots, c_m \in k$,
  define a $k\langle a,b\rangle$-module $M = M_V$ as follows: the
  underlying vector space of $M$ is $V^{(m)}$, and we let $a$ and $b$
  act on $M$ via the linear operators
  \[
  A=\left[\begin{matrix}c_1\id_V \\ & c_2\id_V \\ && \ddots \\ &&&
      c_m\id_V\end{matrix}\right]
  \qquad\text{and}\qquad
  B=\left[\begin{matrix}X_1 \\ \id_V & X_2 \\ &\ddots & \ddots \\ && \id_V&
      X_m\end{matrix}\right]\,,  
  \]
  respectively.

  A homomorphism of $k\langle a,b\rangle$-modules from $M_{V}$ to
  $M_{V'}$ is defined by a vector space homomorphism $S \colon V^{(m)}
  \to {V'}^{(m)}$ satisfying $SA = A'S$ and $SB = B'S$, where $A$ and
  $B$, resp.\ $A'$ and $B'$, are the matrices defining the $k \langle
  a, b \rangle$ structures on $M_V$, resp.\ $M_{V'}$.  Two modules
  $M_{V}$ and $M_{V'}$ are isomorphic via $S$ if and only if $S$ is
  invertible over $k$.  Similarly, a module $M_V$ is decomposable if
  and only if there is a non-trivial idempotent endomorphism $S \colon
  M_V \to M_V$.
  
  Assume that $\dim_k V = \dim_k V'$ and let $S \colon V^{(m)} \to
  {V'}^{(m)}$ be a vector space homomorphism such that $S A = A' S$
  and $S B = B' S$.  Then we can write $S = (\sigma_{ij})_{1 \leq i,j
    \leq m}$, with each $\sigma_{ij} \colon V \to V'$; we will show
  that $S = \diag(\sigma_{11}, \dots, \sigma_{11})$ and
  that $\sigma_{11} X_i = X_i' \sigma_{11}$ for each $i=1, \dots, m$.
  Thus $S$ is an isomorphism if and only if $\sigma_{11} \colon V \to
  V'$ is an isomorphism of $\Lambda$-modules, and $S$ is idempotent if
  and only if $\sigma_{11}$ is.

  The equation $SA = A' S$ implies $\sigma_{ij}c_j = c_i \sigma_{ij}$
  for every $i,j$.  Since the scalars $c_i$ are pairwise distinct,
  this implies that $\sigma_{ij}=0$ for all $i\neq j$, so that $S$ is
  a block-diagonal matrix.  Now the equation $S B = B' S$ becomes
  \footnotesize
  \[
    \begin{bmatrix}
    \sigma_{11}X_1 \\ 
    \sigma_{22} &\ddots \\ 
    &  \ddots &\sigma_{m-1,m-1}X_{m-1} \\ 
    &  & \sigma_{mm}& \sigma_{mm} X_m
    \end{bmatrix}
  =
    \begin{bmatrix}
      X_1'\sigma_{11} \\ 
      \sigma_{11} & \ddots\\ 
      &  \ddots  &X_{m-1}'\sigma_{m-1,m-1} \\ 
      & & \sigma_{m-1,m-1}&X_m' \sigma_{mm}
      \end{bmatrix}\,,
  \]
  \normalsize which implies that $\sigma_{ii}=\sigma_{11}$ for each
  $i=1, \dots, m$.  Denote the common value by $\sigma$; then the
  diagonal entries show that $\sigma X_i = X_i' \sigma$ for each $i=1,
  \dots, m$.
\end{example}

\begin{example}
  [{\cite{Drozd:1972}}]
  \label{eg:Drozdring}
  Let $k$ be an infinite field, and set $R = k[a,b]/(a^2,ab^2,b^3)$.
  Then $R$ is finite-length wild.  Consequently, the commutative
  polynomial ring $k[a_1, \dots, a_n]$ and the commutative power
  series ring $k[\![a_1, \dots, a_n]\!]$ are both finite-length wild
  as soon as $n \geq 2$.

  The last sentence follows from the one before, since any $R$-module
  of finite length is also a module of finite length over $k[a,b]$ and
  $k[\![a,b]\!]$, whence also over $k[a_1, \dots, a_n]$ and $k[\![a_1,
  \dots, a_n]\!]$.  Thus by Example~\ref{eg:twovars} above, it
  suffices to construct a representation embedding of the
  finite-length modules over $k\langle x,y\rangle$ into
  $\operatorname{mod} R$.

  Let $V$ be a $k\langle x,y\rangle$-module of $k$-dimension $n$, with
  linear operators $X$ and $Y$ representing the $k\langle
  x,y\rangle$-module structure.  We define $(32 n \times 32
  n)$ matrices $A$ and $B$ yielding an $R$-module structure on $M =
  M_V = V^{(32)}$.  To wit, let $c_1, \dots, c_5 \in k$ be distinct
  scalars and
  \[
  A = 
  \left[\begin{matrix}0 & 0 & \id_{V^{(15)}} \\ 
      0 & 0 & 0 \\
      0 & 0 & 0
      \end{matrix}\right]
    \qquad\text{ and }\qquad
  B = 
  \begin{bmatrix}B_1 & 0 & B_2 \\
    0 & 0 & B_3 \\
    0 & 0 & B_1
    \end{bmatrix}\,,
  \]
  where
  \[
    B_1 = 
    \begin{bmatrix}
      0 & 0 & \id_{V^{(5)}} \\ 
      0 & 0 & 0 \\
      0 & 0 & 0
    \end{bmatrix}\,,
    \qquad  
    B_2 =
    \begin{bmatrix}
      0 & 0 & 0 \\
      \id_{V^{(5)}} & 0 & 0\\
      0 & C & 0 
    \end{bmatrix}\,,
    \quad\text{ and }\quad
    B_3 = 
  \begin{bmatrix}
    0 & D & 0 \\
  \end{bmatrix}\,,
  \]
  and finally
  \[
  C = 
  \begin{bmatrix}
    c_1\id_V \\ & c_2\id_V \\ && \ddots \\ &&&
      c_5\id_V
  \end{bmatrix}
  \quad\text{ and }\quad
  D = 
  \begin{bmatrix}
    \id_V & 0 & \id_V & \id_V & \id_V\\
    0 & \id_V & \id_V & X & Y
  \end{bmatrix}\,.
  \]
  Observe that, while all the blocks in $B_1$, $B_2$, and $B_3$ are
  $(5n \times 5n)$, the blocks in $A$ and $B$ are not of uniform
  size; their four corner blocks are $(15n \times 15n)$, while the
  center block is $(2n \times 2n)$.

  One verifies easily that $AB=BA$ and $A^2 = AB^2 = B^3=0$, so $A$ and
  $B$ do indeed define an $R$-module structure on $M_V$.  

  Let $V'$ be a second $n$-dimensional $k\langle x,y\rangle$-module,
  with linear operators $X'$ and $Y'$ defining the $k\langle
  x,y\rangle$-module structure, and define $M' = M_{V'}$ as above,
  with linear operators $A'$ and $B'$ giving $M'$ the structure of an
  $R$-module.  Let $S \colon V^{(32n)} \to {V'}^{(32n)}$ be a vector
  space homomorphism such that $SA=A'S$ and $SB=B'S$.  We will show
  that in this case $S$ is a block-upper-triangular matrix (with
  blocks of size $n$) having constant diagonal block $\sigma \colon V
  \to V'$ which satisfies $\sigma X = X' \sigma$ and $\sigma Y = Y'
  \sigma'$.  Thus $S$ is an isomorphism of $R$-modules if and only if
  $\sigma$ is an isomorphism of $k\langle x,y\rangle$-modules, and $S$
  is a split surjection if and only if $\sigma$ is so.  It follows
  that the functor $V \rightsquigarrow M_V$ is a representation
  embedding, and $R$ is finite-length wild.

  Note that $A$ is independent of the module $V$, so $A=A'$ and
  $SA=AS$.  Write $S$ in block format, with blocks of the same sizes
  as $A$, 
  \[
  S = \begin{bmatrix}
    S_{11} & S_{12} & S_{13} \\
    S_{21} & S_{22} & S_{23} \\
    S_{31} & S_{32} & S_{33} 
  \end{bmatrix}\,,
  \]
  this means
  \[
  \begin{bmatrix}
    0 & 0 & S_{11} \\
    0 & 0 & S_{21} \\
    0 & 0 & S_{31} \\
  \end{bmatrix}
  =
  \begin{bmatrix} 
    S_{31} & S_{32} & S_{33} \\
    0 & 0 & 0 \\
    0 & 0 & 0 \\
  \end{bmatrix}
  \]
  so that 
  \[
  S = \begin{bmatrix}
    S_{11} & S_{12} & S_{13} \\
    0 & S_{22} & S_{23} \\
    0 & 0 & S_{11} 
  \end{bmatrix}\,.
  \]
  Using now the equation $SB=B'S$, we get 
  \[
  \begin{bmatrix}
    S_{11}B_1 & 0 & S_{11}B_2 + S_{12}B_3 + S_{13}B_1 \\
    0 & 0  & S_{22}B_3 + S_{23}B_1 \\
    0 & 0  & S_{11}B_1
  \end{bmatrix}
  =
  \begin{bmatrix}
    B_1'S_{11} & B_1'S_{12} & B_1'S_{13} + B_2'S_{11} \\
    0 & 0  & B_3'S_{11} \\
    0 & 0 & B_1'S_{11}
  \end{bmatrix}\,.
  \]
  In particular, $S_{11}B_1 = B_1' S_{11}$.  Write the $(15n\times
  15n)$ matrix $S_{11}$ in $(5n \times 5n)$-block format as
  \[
  S_{11} = \begin{bmatrix}
    T_{11} & T_{12} & T_{13} \\
    T_{21} & T_{22} & T_{23} \\
    T_{31} & T_{32} & T_{33} 
  \end{bmatrix}\,.
  \]
  Then the definition of $B_1$ and $B_1'$ gives 
  \[
  S_{11} = \begin{bmatrix}
    T_{11} & T_{12} & T_{13} \\
    0  & T_{22} & T_{23} \\
    0  & 0 & T_{11} 
  \end{bmatrix}
  \]
  as above.  Now $S_{12}$ is $(15n \times 2n)$, so we write it in $(5n
  \times 2n)$ blocks as 
  $S_{12} = \begin{bmatrix}U_1 & U_2 & U_3 \end{bmatrix}^{\tr}$
  and use $B_1' S_{12} =0$ to get 
  $S_{12} = \begin{bmatrix}U_1 & U_2 & 0 \end{bmatrix}^{\tr}$.  We
  also have $S_{22} B_3 + S_{23} B_1 = B_3'S_{11}$; if we write
  $S_{23} = \left[\begin{matrix} V_1 & V_2 & V_3 \end{matrix}\right]$,
  then this equation reads
  \[
  \begin{bmatrix} 0 & S_{22}D & 0 \end{bmatrix}
  +
  \begin{bmatrix} 0 & 0 & V_1 \end{bmatrix}
  =
  \begin{bmatrix} 0 & D'T_{22} & D'T_{23} \end{bmatrix}\,.
  \]
  It follows that $S_{22}D = D'T_{22}$ and $S_{23} =
  \left[\begin{matrix} D'T_{23} & V_2 &
      V_3 \end{matrix}\right]$.
  
  Finally write
  \[
  S_{13} = 
  \begin{bmatrix}
    W_{11} & W_{12} & W_{13} \\
    W_{21} & W_{22} & W_{23} \\
    W_{31} & W_{32} & W_{33} 
  \end{bmatrix}
  \]
  and consider the equation
  \[
  S_{11}B_2 + S_{12}B_3 + S_{13}B_1 = B_1'S_{13} + B_2'S_{11}
  \,.
  \]
  It becomes
  \[
  \begin{bmatrix}
    T_{12} & T_{13}C + U_1 D & W_{11} \\
    T_{22} & T_{23}C + U_2 D & W_{21} \\
    0 & T_{11}C & W_{31} 
  \end{bmatrix}
  =
  \begin{bmatrix}
    W_{31} & W_{32} & W_{33} \\
    T_{11} & T_{12} & T_{13} \\
    0 & C' T_{22} & C' T_{23}
  \end{bmatrix}\,.
  \]
  We read off $T_{22} = T_{11}$ and $T_{11} C = C' T_{11}$.  Since $C=C'$
  is a diagonal matrix with distinct blocks $c_1 \id_V, \dots, c_5
  \id_V$, this forces $T_{11}$ to be block-diagonal, 
  \[
  T_{11} = 
  \begin{bmatrix}
    Z_1 \\ & \ddots \\ & & Z_5
  \end{bmatrix}\,,
  \]
  with each $Z_i$ an $(n \times n)$ matrix.

  We also have $S_{22} D = D' T_{11}$.  Write $S_{22} =
  \left[\begin{smallmatrix} E & F \\ G & H \end{smallmatrix}\right]$
  so that this reads
  \small
  \[
  \begin{bmatrix}
    E & F \\ G & H
  \end{bmatrix}
  \begin{bmatrix}
    \id_V & 0 & \id_V & \id_V & \id_V \\
    0 & \id_V & \id_V & X & Y
  \end{bmatrix}
  =
  \begin{bmatrix}
    \id_V & 0 & \id_V & \id_V & \id_V \\
    0 & \id_V & \id_V & X' & Y'
  \end{bmatrix}  
  \begin{bmatrix}
    Z_1 \\ & \ddots \\ && Z_5
  \end{bmatrix}\,.
  \]
  \normalsize
  Carrying out the multiplication, we conclude that $F=G=0$, so that
  $E = Z_1 = Z_3 = Z_4 = Z_5 $ and $H = Z_2 = Z_3$.  Set $\sigma = E
  =H$.  Then $H X = X' Z_4$ and $HY=Y' Z_5$ imply $\sigma X = X' \sigma$
  and $\sigma Y = Y' \sigma$, so that $\sigma$ is a homomorphism of
  $k\langle x,y\rangle$-modules $V \to V'$.  Since $T_{11}$ and
  $S_{22}$ are both block-diagonal with diagonal block $\sigma$, we
  conclude that $S$ is block-upper-triangular with constant diagonal
  block $\sigma$, as claimed.  
\end{example}

We restate one part of this example separately for later use.

\begin{prop}
  \label{prop:polyring}
  Let $Q = k[a_1, \dots, a_n]$ or $k[\![a_1, \dots, a_n]\!]$, with $n
  \geq 2$.  If there is a representation embedding of the
  finite-length $Q$-modules into a module category $\calc$, then
  $\calc$ is wild. \qed
\end{prop}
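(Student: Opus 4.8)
The plan is to read the statement as transitivity of wildness under representation embeddings and reduce it to composing two functors. Recall that $\calc$ is wild precisely when, for \emph{every} finite-dimensional $k$-algebra $\Lambda$, there is a representation embedding $\mod\Lambda\to\calc$. So I would fix such a $\Lambda$. By the final sentence of Example~\ref{eg:Drozdring}, the ring $Q$ is finite-length wild, hence there is a representation embedding $\calg\colon\mod\Lambda\to\calc_0$, where $\calc_0$ denotes the full subcategory of finite-length $Q$-modules. By hypothesis there is also a representation embedding $\calf\colon\calc_0\to\calc$. I would then take the composite $\calf\circ\calg\colon\mod\Lambda\to\calc$ and argue that it is again a representation embedding; since $\Lambda$ was arbitrary, this shows $\calc$ is wild.

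The only thing to verify is that the three defining properties of a representation embedding are stable under composition. Exactness is immediate, a composite of exact functors being exact; one uses here that $\calc_0$ is abelian, so ``short exact sequence in $\calc_0$'' is unambiguous and $\calf$ genuinely carries the short exact sequences produced by $\calg$ to short exact sequences in $\calc$. For preservation of indecomposability: if $M\in\mod\Lambda$ is indecomposable then $\calg(M)$ is indecomposable in $\calc_0$, hence $\calf(\calg(M))$ is indecomposable in $\calc$. For preservation of non-isomorphism: if $M\not\cong N$ in $\mod\Lambda$ then $\calg(M)\not\cong\calg(N)$, and therefore $\calf(\calg(M))\not\cong\calf(\calg(N))$.

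I do not anticipate any genuine obstacle here; if forced to name the ``hard'' step it is the routine check in the previous paragraph, and the real content is carried entirely by Example~\ref{eg:Drozdring}. If one preferred not to quote the final sentence of that example and to quote only that the Drozd ring $R=k[a,b]/(a^2,ab^2,b^3)$ is finite-length wild, one could insert one more composition: restriction of scalars along a surjection $Q\onto k[a_1,a_2]\onto R$ (and its power-series analogue) is exact, reflects isomorphisms, and sends finite-length modules to finite-length modules, so it is itself a representation embedding of $\mod R$ into $\calc_0$, and one composes with it as well. Either way the proposition follows, which is why it is reasonable to assert it essentially without proof.
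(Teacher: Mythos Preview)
Your proposal is correct and matches the paper's approach: the proposition is stated with a \qed\ immediately after the sentence ``We restate one part of this example separately for later use,'' so the paper offers no argument beyond pointing back to Example~\ref{eg:Drozdring}. You have simply made explicit the trivial transitivity-of-representation-embeddings step that the paper leaves implicit.
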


\section{Proof of the Main Theorem}\label{sect:main}

We use without fanfare the theory of matrix factorizations, namely the
equivalence between  matrix factorizations of a power series
$f$ and MCM modules over the hypersurface ring defined by $f$
(\cite{Eisenbud:1980}, see~\cite{Yoshino:book} for a complete
discussion).  The two facts we will use explicitly are contained in
the following Remark and Example.

\begin{remark}\label{sit:KnorFunc}
  Let $S$ be a regular local ring and $f \in S$ a non-zero non-unit.
  Set $T = S[\![u,v]\!]$.  Then the functor from matrix factorizations
  of $f$ over $S$ to matrix factorizations of $f+uv$ over $T$, defined
  by 
  \[
  (\phi, \psi) \mapsto \left(\left[
      \begin{matrix}\phi & -v I \\ u I
        & \psi \end{matrix} 
    \right], \left[
      \begin{matrix} \psi & v I \\ -u I & \phi\end{matrix}
    \right]\right)\,,
  \] 
  induces an equivalence of stable
  categories~\cite[Theorem 12.10]{Yoshino:book}.  In particular it gives a
  bijection on isomorphism classes of MCM modules over $S/(f)$ and
  $T/(f+uv)$.
\end{remark}
  
\begin{example}
  Let $k$ be a field and set $S_n =
  k[\![x_1,\ldots,x_n,y_1,\ldots,y_n]\!]$ and $f_n = x_1 y_1 + \cdots
  + x_n y_n$ for $n \geq 1$.  The ring $R_n = S_n/(f_n)$ is an ($A_1$)
  hypersurface singularity, so has finite Cohen-Macaulay type; in
  fact, there is only one non-free indecomposable MCM $R_n$-module, or
  equivalently, one nontrivial indecomposable matrix factorization of
  $f_n$. By the remark above, the non-trivial indecomposable matrix
  factorizations of $f_n$ are in bijection with those of $f_{n+1}$.
  For $n = 1$, the element $f_1 = x_1 y_1$ has only one nontrivial
  indecomposable matrix factorization up to equivalence, namely that
  represented by $(\phi_1, \psi_1) = (x_1, y_1)$. Defining
  \[
  (\phi_i, \psi_i) = \left(\left[
      \begin{matrix}\phi_{i-1} & -y_i I \\ x_i I
        & \psi_{i-1} \end{matrix} 
    \right], \left[
      \begin{matrix} \psi_{i-1} & y_i I \\ -x_i I &
        \phi_{i-1} \end{matrix} \right]\right)\,,
  \] 
  we have that $(\phi_n, \psi_n)$ represents the sole nontrivial
  indecomposable matrix factorization of $f_n$ over $S_n$.
\end{example}

Next we see that, at the cost of introducing some parameters, every
power series of sufficiently high order can be written in the form of
an ($A_1$) singularity, with some control over the coefficients.

\begin{lemma}\label{lem:A1}
  Let $f \in k[\![x_1, \dots, x_n,z]\!]$ be a power series of order at
  least $4$, and let $a_1, \dots, a_n$ be parameters.  Then $f$ can be written
  in the form
  \begin{equation}\label{eq:A1}
  f = z^2h + (x_1-a_1z)g_1 + \cdots + (x_n-a_n z)g_n
  \end{equation}
  where $g_1, \dots, g_n,h$ are power series in $x_1, \dots, x_n,z$
  with coefficients involving the parameters $a_1, \dots, a_n$, each
  $g_i$ has order at least $3$ in $x_1, \dots, x_n,z$, and $h$ has
  order at least $2$ in $x_1, \dots, x_n,z$.
\end{lemma}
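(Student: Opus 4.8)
The plan is to observe that the substitution $x_i \mapsto x_i - a_i z$, leaving $z$ fixed, is an invertible linear change of the variables $x_1, \dots, x_n, z$ (its inverse being $x_i = w_i + a_i z$), and then to sort the monomials of $f$ by hand. Linear changes of coordinates neither raise nor lower the order of a power series, so this move costs nothing in terms of order while turning each $x_i - a_i z$ into a coordinate.

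First I would set $w_i := x_i - a_i z$ for $i = 1, \dots, n$ and substitute $x_i = w_i + a_i z$ into $f$. This yields a power series $\tilde f$ in $w_1, \dots, w_n, z$ whose coefficients are polynomials in $a_1, \dots, a_n$ (each coefficient of $\tilde f$ is a finite $k$-linear combination of products of the $a_i$), and, the substitution being linear, $\tilde f$ still has order at least $4$.

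Next I would partition the monomials of $\tilde f$. The monomials in which no $w_i$ appears are of the form $c\,z^m$ with $m \geq 4$; they sum to a power series in $z$ alone of order $\geq 4$, which I write as $z^2 h$ with $h$ of order $\geq 2$ in $z$, hence of order $\geq 2$ in $x_1, \dots, x_n, z$. For each of the remaining monomials I pick the least index $j$ for which $w_j$ occurs and assign that monomial, divided by $w_j$, to a series $g_j$; this writes the rest of $\tilde f$ as $\sum_{j=1}^{n} w_j g_j$, and since every monomial of $g_j$ comes from one of degree $\geq 4$ by deleting a single factor $w_j$, each $g_j$ has order $\geq 3$. Finally I substitute $w_i = x_i - a_i z$ back: this is again a linear change of coordinates, so the orders of $h$ (a series in $z$, untouched) and of the $g_j$ are unchanged, $\tilde f$ reverts to $f$, and the resulting identity $f = z^2 h + \sum_{i=1}^{n}(x_i - a_i z)g_i$ is exactly~\eqref{eq:A1}.

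I do not expect a genuine obstacle here: the only thing to watch is the bookkeeping in the ``least index appearing'' decomposition, and the hypothesis that $f$ has order at least $4$ is used precisely to force the pure-$z$ remainder into $\m^2$ and the other remainders into $\m^3$. (If one prefers to avoid coordinates, one can argue instead that reduction modulo $J := (x_1 - a_1 z, \dots, x_n - a_n z)$ identifies $k[\![x_1,\dots,x_n,z]\!]/J$ with $k[\![z]\!]$ via $x_i \mapsto a_i z$, sending $f$ to $f(a_1 z, \dots, a_n z, z)$, a series in $z$ of order $\geq 4$; writing that image as $z^2 h(z)$ and subtracting $z^2 h$ places $f - z^2 h$ in $J$, and the order-$\geq 3$ bound on the cofactors $g_i$ is the same count.)
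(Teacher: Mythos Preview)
Your argument is correct. The paper takes a slightly different, though closely related, route: instead of changing coordinates, it works directly with the ideals $\m=(x_1,\dots,x_n,z)$ and $I=(x_1-a_1z,\dots,x_n-a_nz)$ and verifies the identity $(z^2)+I\m=\m^2$ by checking each degree-two monomial ($z^2$, $x_iz$, $x_ix_j$) lies in the left-hand side; multiplying through by $\m^2$ then gives $\m^4=z^2\m^2+I\m^3$, which is exactly the containment needed. Your change of variables $w_i=x_i-a_iz$ amounts to the observation that this ideal identity becomes tautological once $I$ is replaced by a coordinate ideal $(w_1,\dots,w_n)$, after which your monomial sort is the obvious decomposition. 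The paper's version is coordinate-free and marginally shorter; yours makes the underlying reason (that $x_1-a_1z,\dots,x_n-a_nz,z$ is just another regular system of parameters) completely transparent and gives an explicit recipe for the $g_i$. Either is fine.
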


\begin{proof}
  Work over $k[\![x_1,\dots, x_n,z]\!]$, with the parameters $a_1,
  \dots, a_n$ considered as variable elements of $k$, and consider the
  ideals $\m = (x_1,\dots, x_n,z)$ and $I = (x_1-a_1z, \dots, x_n-a_n
  z)$.  We claim that $(z^2)+I\m = \m^2$.  The left-hand side is
  clearly contained in the right.  For the other inclusion, simply
  check each monomial of degree $2$: $z^2 \in (z^2)+I\m$ by
  definition, whence
  \[
  x_i z = (x_i-a_i z)z+a_i z^2 \quad \in (z^2)+I\m
  \]
  for each $i$, and 
  \[
  x_i x_j = (x_i-a_i z)x_j + a_ix_j z \quad \in (z^2)+I\m
  \]
  for each $i,j$.  Writing $\m^4 = \m^2\m^2 = ((z^2)+I\m)\m^2 =
  z^2\m^2 + I\m^3$ completes the proof.
\end{proof}

Given an expression for $f \in k[\![x_1, \dots, x_n,z]\!]$ as in
Lemma~\ref{lem:A1}, we obtain from Remark~\ref{sit:KnorFunc} a matrix
factorization $(\phi_n,\psi_n)$ of $f$, with
\[
(\phi_0, \psi_0) = \left(\left[z^2\right],\left[h\right]\right)\,, 
\qquad\qquad
(\phi_1, \psi_1) = 
  \left(\left[
      \begin{matrix}z^2 & -g_1 \\ x_1-a_1z & h\end{matrix}
      \right], \left[
      \begin{matrix}h & g_1 \\ -x_1+a_1z & z^2\end{matrix}
      \right]\right)
\]
and, in general,
\[
(\phi_n, \psi_n) = 
  \left(\left[
      \begin{matrix}\phi_{n-1} & -g_n \id_{2^{n-1}} \\ (x_n-a_n z) \id_{2^{n-1}}
        & \psi_{n-1}\end{matrix} 
      \right], \left[
      \begin{matrix} \psi_{n-1} & g_n \id_{2^{n-1}} \\ (-x_n+a_n z)
        \id_{2^{n-1}} & \phi_{n-1}\end{matrix} 
      \right]\right)\,.
\]

We now describe how to ``inflate'' these matrix factorizations given a
$k[a_1,\dots, a_n]$-module of finite length.

\begin{definition}\label{def:inflation}
  Let $A_1, \dots, A_r$ be pairwise commuting $m \times m$ matrices
  over the field $k$.  Let $f = f(a_1, \dots, a_r)$ be a power series in
  variables $x_1, \dots, x_n,z$ involving the parameters $a_1, \dots,
  a_r$, which we think of as variable elements of $k$.  Let $F=F(A_1,
  \dots, A_r)$ be the $m \times m$ matrix obtained by replacing in $f$
  each scalar $\alpha \in k$ by $\alpha \id_m$, each $x_i$ by $x_i
  \id_m$, $z$ by $z \id_m$, and each parameter $a_i$ by the
  corresponding matrix $A_i$.  We call this process \emph{inflating}
  $f$.

  If $(\phi,\psi) = (\phi(a_1, \dots, a_r), \psi(a_1, \dots, a_r))$ is
  a matrix factorization, again involving parameters $a_1, \dots,
  a_r$, of an element $f \in k[\![x_1, \dots, x_n,z]\!]$, let $(\Phi,
  \Psi) = (\Phi(A_1, \dots, A_r), \Psi(A_1, \dots, A_r))$ be the
  result of inflating each entry of $\phi$ and $\psi$.  
\end{definition}

Note that in the second half of the definition, $f$ does not involve
the parameters.  It's easy to check that, since the $A_i$ commute,
$(\Phi,\Psi)$ is again a matrix factorization of $f$.  

It follows from Lemma~\ref{lem:A1} that a power series $f \in
k[\![x_1, \dots, x_n,z]\!]$ of order at least $4$ has, for every
$n$-tuple of commuting $m\times m$ matrices $(A_1, \dots, A_n)$ over
$k$, a matrix factorization
\begin{equation}\label{eq:biggen}
(\Phi_n,\Psi_n) = (\Phi(A_1, \dots, A_n), \Psi(A_1, \dots, A_n))
\end{equation}
of size $m 2^n$.

\begin{notation}
  Let $E = [e_{ij}]$ be a matrix with entries in $k[\![x_1, \dots,
  x_n,z]\!]$.  We set $\bar E = [\bar{e_{ij}}]$, where $\bar{e_{ij}}$
  denotes the image of $e_{ij}$ modulo the square of the maximal ideal
  $(x_1, \dots, x_n,z)$.

  Also, given a monomial $w \in k[\![x_1, \dots, x_n,z]\!]$, let
  $E\{w\}$ denote the matrix $[e_{ij}\{w\}]$, where $e_{ij}\{w\}$ denotes
  the coefficient of $w$ in the power series expansion of $e_{ij}$. We
  call this the ``$w$-strand'' of the matrix $E$.
\medskip

  For the rest of the paper, we let $f$ be a power series of order at
  least $4$ as in Lemma~\ref{lem:A1}, let $A_1, \dots, A_n$ and $A_1',
  \dots, A_n'$ be $n$-tuples of commuting $m\times m$ matrices over
  $k$, and let $(\Phi_n, \Psi_n)= (\Phi(A_1, \dots, A_n), \Psi(A_1,
  \dots, A_n))$ and $(\Phi'_n, \Psi'_n) = (\Phi(A_1', \dots, A_n'),
  \Psi(A_1', \dots, A_n'))$ be inflated matrix factorizations of $f$
  as in~\eqref{eq:biggen}.
\end{notation}

\begin{lemma}\label{lem:fourbigblocks}
  Let $i \in \{0, \dots, n\}$ and let $C$, $D$ be two $(m
  2^i\times m2^i)$ matrices with entries in $k$.  If $C$ and
  $D$ satisfy
  \begin{equation*}\tag{$\dagger$}\label{eq:barinduct}
     C \bar{\Phi_i} = \bar{\Phi_i'}D 
     \qquad\text{ and }\qquad
     D\bar{\Psi_i} = \bar{\Psi_i'}C\,,
  \end{equation*}
  then
  \begin{enumerate}[\quad$(i)$]
  \item\label{item:induct1} $C$ and $D$ are $(m \times m)$-block lower
    triangular, i.e.\ of the form
      \[
      C =   
      \left[\begin{matrix} C_{11} \\ &C_{22}&&\text{\huge{0}} \\ && \\
          &&&\ddots \\ &\text{\Huge{*}}&&&C_{2^i,2^i} 
        \end{matrix}\right]\,,\qquad
      D = 
      \left[\begin{matrix} D_{11} \\ &D_{22}&&\text{\huge{0}} \\ && \\
          &&&\ddots \\ &\text{\Huge{*}}&&&D_{2^i,2^i} 
        \end{matrix}\right]\,.
      \]
    \item\label{item:induct2} For each $j = 1, \dots, 2^i$,
      $C_{j j}$ and $D_{j j}$ are in the set $\{C_{11},\ D_{11}\}$.
    \item\label{item:induct3} For each $j = 1, \ldots, i$,
      $C_{2^i,2^i}A_{j} = A'_{j}D_{2^i-2^{j-1},2^i-2^{j-1}}$. 
  \end{enumerate}
\end{lemma}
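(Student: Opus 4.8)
The plan is to argue by induction on $i$, splitting off the variable $x_i$ at each step. It pays to strengthen the statement first: the hypotheses $(\dagger)$ are carried to themselves by the substitution $(\bar{\Phi_i},\bar{\Psi_i},C,D)\mapsto(\bar{\Psi_i},\bar{\Phi_i},D,C)$, and parts $(i)$ and $(ii)$ are symmetric in $C$ and $D$; so I would prove by induction the conjunction of $(i)$, $(ii)$, $(iii)$, and the \emph{mirrored} statement $(iii')$ obtained from $(iii)$ by interchanging $C\leftrightarrow D$. For $i=0$ there is nothing to do: since $z^{2}$, $h$, and every $g_{j}$ have order at least $2$, each reduces to $0$ modulo $\m^{2}=(x_{1},\dots,x_{n},z)^{2}$, so $\bar{\Phi_0}=\bar{\Psi_0}=0$, there is a single diagonal block, and the index set in $(iii)$ is empty.

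For the inductive step, write $C$ and $D$ in $2\times 2$ block form with $(m2^{i-1}\times m2^{i-1})$ quadrants $C_{\mathrm{ul}},C_{\mathrm{ur}},C_{\mathrm{ll}},C_{\mathrm{lr}}$, and similarly for $D$. By construction $\bar{\Phi_i}=\left[\begin{smallmatrix}\bar{\Phi_{i-1}}&0\\ L_i&\bar{\Psi_{i-1}}\end{smallmatrix}\right]$ and $\bar{\Psi_i}=\left[\begin{smallmatrix}\bar{\Psi_{i-1}}&0\\ -L_i&\bar{\Phi_{i-1}}\end{smallmatrix}\right]$, where $L_i$ is the inflation of $(x_i-a_iz)\id_{2^{i-1}}$ — the block-diagonal matrix with every diagonal block $x_i\id_m-zA_i$ — and where $\bar{\Phi_{i-1}}$ and $\bar{\Psi_{i-1}}$ involve only $x_1,\dots,x_{i-1},z$. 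Since $C$ and $D$ have entries in $k$, the coefficient of $x_i$ in the identity $C\bar{\Phi_i}=\bar{\Phi_i'}D$ is itself an identity of constant matrices; reading it off forces $C_{\mathrm{ur}}=D_{\mathrm{ur}}=0$ and $C_{\mathrm{lr}}=D_{\mathrm{ul}}$, and the coefficient of $x_i$ in $D\bar{\Psi_i}=\bar{\Psi_i'}C$ gives moreover $D_{\mathrm{lr}}=C_{\mathrm{ul}}$. Feeding this back, the part of the two identities free of $x_i$ says exactly that $\hat C:=C_{\mathrm{ul}}$ and $\hat D:=D_{\mathrm{ul}}$ satisfy $(\dagger)$ with $i$ replaced by $i-1$. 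The inductive hypothesis applies to $(\hat C,\hat D)$, and, together with the quadrant identifications just found and the remark that the top-left $(m\times m)$ blocks of $\hat C,\hat D$ are $C_{11},D_{11}$, this yields $(i)$ and $(ii)$ at level $i$ at once.

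For $(iii)$ I would treat $j<i$ and $j=i$ separately. When $j<i$, the block $C_{2^i,2^i}$ lies in $C_{\mathrm{lr}}$, hence equals the bottom-right block of $\hat D$, while $D_{2^i-2^{j-1},2^i-2^{j-1}}$ lies in $D_{\mathrm{lr}}=\hat C$, hence equals the diagonal block of $\hat C$ in position $2^{i-1}-2^{j-1}$; the required identity is then exactly $(iii')$ for $(\hat C,\hat D)$ at level $i-1$, which the strengthened hypothesis supplies. The case $j=i$ is the real work. Here one returns to the lower-left quadrant of $C\bar{\Phi_i}=\bar{\Phi_i'}D$, which after the vanishings above reads
\[
C_{\mathrm{ll}}\,\bar{\Phi_{i-1}}-z\,C_{\mathrm{lr}}\,N_i \;=\; -\,z\,N_i'\,\hat D+\bar{\Psi_{i-1}'}\,D_{\mathrm{ll}}\,,
\]
with $N_i$ (resp.\ $N_i'$) the block-diagonal matrix having each diagonal block $A_i$ (resp.\ $A_i'$), and one inspects its bottom-right $(m\times m)$ corner. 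Two structural features of the reduced matrices, both immediate from the recursion that builds them, make this tractable: the last block-column of each of $\bar{\Phi_{i-1}},\bar{\Psi_{i-1}}$ is zero, and the last block-row of each is supported only in the columns $2^{i-1}-2^{j-1}$ for $1\le j\le i-1$, with entry there equal to $\pm(x_j\id_m-zA_j)$. The first feature kills the $C_{\mathrm{ll}}\bar{\Phi_{i-1}}$ term in that corner; the second rewrites the $\bar{\Psi_{i-1}'}D_{\mathrm{ll}}$ term as $\sum_{j<i}\pm(x_j\id_m-zA_j')(D_{\mathrm{ll}})_{2^{i-1}-2^{j-1},\,2^{i-1}}$. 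Comparing $x_j$-strands for each $j<i$ then shows every block $(D_{\mathrm{ll}})_{2^{i-1}-2^{j-1},\,2^{i-1}}$ vanishes, and with that the $z$-strand collapses to $C_{2^i,2^i}A_i=A_i'D_{2^{i-1},2^{i-1}}$, which is $(iii)$ for $j=i$ since $2^i-2^{i-1}=2^{i-1}$. The mirror $(iii')$ at level $i$ follows symmetrically, from $D\bar{\Psi_i}=\bar{\Psi_i'}C$ together with $(iii)$ rather than $(iii')$ at level $i-1$; this closes the induction.

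The step I expect to be the obstacle is organizing $(iii)$: one has to notice that the induction will not close on $(iii)$ by itself — its $j<i$ instances at level $i$ require the mirrored statement at level $i-1$ — and one has to isolate exactly the right structural features of $\bar{\Phi_i}$ and $\bar{\Psi_i}$ (a vanishing last block-column and the explicit shape of the last block-row, refining their evident nilpotence) that force all the spurious cross-terms in the $j=i$ computation to cancel. Once those features are in hand, the remainder is bookkeeping with strands.
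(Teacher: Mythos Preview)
Your argument for parts $(i)$ and $(ii)$ is exactly the paper's: split $C$, $D$ into quadrants, read off the $x_i$-strand to get $C_{\mathrm{ur}}=D_{\mathrm{ur}}=0$, $C_{\mathrm{lr}}=D_{\mathrm{ul}}$, $D_{\mathrm{lr}}=C_{\mathrm{ul}}$, and recurse on $(C_{\mathrm{ul}},D_{\mathrm{ul}})$.

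For part $(iii)$ your proof is correct but takes a longer route than the paper's. You strengthen to $(iii)+(iii')$ and split into cases $j<i$ (handled by the mirrored hypothesis at level $i-1$) and $j=i$ (handled by a direct strand computation using that the last block-column of $\bar{\Phi_{i-1}}$ vanishes and the last block-row has the explicit shape you describe). The paper instead treats all $j$ at once, without induction, by looking at the $(2^i,\,2^i-2^{j-1})$ block of $C\bar{\Phi_i}=\bar{\Phi_i'}D$ directly. The point is that the structural features you isolate for $j=i$ in fact hold uniformly: for \emph{every} $j\in\{1,\dots,i\}$ the block-column $2^i-2^{j-1}$ of $\bar{\Phi_i}$ has a single nonzero entry, namely $\pm(x_j\id_m-zA_j)$ in row $2^i$ (this follows from the same recursion you use, together with the vanishing last block-column of $\bar{\Phi_{i-1}}$ and $\bar{\Psi_{i-1}}$). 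Combined with your description of the last block-row of $\bar{\Phi_i'}$ and the lower-triangularity of $D$ from part $(i)$, the $x_l$-strands for $l\neq j$ kill the off-diagonal contributions and the $z$-strand gives $(iii)$ outright. So your $(iii')$ device and the $j<i$/$j=i$ split are avoidable; what you gain is a fully explicit argument that makes visible the cancellations the paper leaves implicit in its one displayed equation.
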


\begin{proof}
  For parts (\ref{item:induct1}) and (\ref{item:induct2}), we proceed
  by induction on $i$.  The base case $i=0$ is vacuous.  For the
  inductive step, since in \eqref{eq:A1} $g_i \in (x_1, \dots,
  x_n,z)^3$, we can express $\bar{\Phi_i}$, $\bar{\Psi_i}$ as
  \[
  \bar{\Phi_i}=\left[
      \begin{matrix}\bar{\Phi_{i-1}} & 0 \\
        (\bar{x_i}\id_m-A_n\bar{z})\id_{2^{i-1}} 
        & \bar{\Psi_{i-1}}\end{matrix} 
  \right],\qquad \bar{\Psi_i}=\left[
      \begin{matrix}\bar{\Psi_{i-1}} & 0 \\
        (-\bar{x_i}\id_m+A_n\bar{z})\id_{2^{i-1}} 
        & \bar{\Phi_{i-1}}\end{matrix} 
      \right]
  \]
  and $\bar{\Phi_i'}$, $\bar{\Psi_i'}$ similarly, matrices over
  $k[\![x_1, \dots, x_n,z]\!]/(x_1, \dots, x_n,z)^2$.  (We write 
  $(\bar{x_i}\id_m-A_n\bar{z})\id_{2^{i-1}}$ to represent a $(m 2^{i-1}
  \times m 2^{i-1})$-block matrix with diagonal blocks
  $\bar{x_i}\id_m-A_n\bar{z}$.) 
Also express $C$ and $D$ in terms of their $(m 2^{i-1}
  \times m 2^{i-1})$-blocks
  \[
  C=\left[
      \begin{matrix} \gamma_{11} & \gamma_{12} \\ \gamma_{21} &
        \gamma_{22}\end{matrix}  
  \right],\qquad D=\left[
      \begin{matrix} \delta_{11} & \delta_{12} \\ \delta_{21} &
        \delta_{22}\end{matrix}  
      \right]\,.
  \]
  From $C \bar{\Phi_{i}} = \bar{\Phi_i'}D$, we get the equations
  \begin{align}
    &\gamma_{11}\bar{\Phi_{i-1}} + \gamma_{12}(\bar{x_i}\id_m - A_i
      \bar {z} ) \id_{2^{i-1}} = \bar{\Phi'_{i-1}}
      \delta_{11}\label{eq:fourbig1}\\
    &\gamma_{22}\bar{\Psi_{i-1}} = (\bar{x_i}\id_m - A_i'
      \bar{z})\id_{2^{i-1}} \delta_{12} + \bar{\Psi_{i-1}'}
      \delta_{22} \label{eq:fourbig2} \\
    &\gamma_{21} \bar{\Phi_{i-1}} + \gamma_{22}(\bar{x_i}\id_m - A_i
      \bar{z})\id_{2^{i-1}} = (\bar{x_i}\id_m -
      A_i'\bar{z})\id_{2^{i-1}} \delta_{11} +
      \bar{\Psi'_{i-1}}\delta_{21} \label{eq:fourbig3} \\
     \intertext{and from $D \bar{\Psi_i} = \bar{\Psi_i'}C$:}
    &\delta_{21} \bar{\Psi_{i-1}} + \delta_{22}(-\bar{x_i}\id_m +
       A_i\bar{z})\id_{2^{i-1}} = (-\bar{x_i}\id_m +
       A_i'\bar{z})\id_{2^{i-1}}\gamma_{11} +
       \bar{\Phi_{i-1}'}\gamma_{21}\,. \label{eq:fourbig4}
  \end{align}
  Since $\bar{\Phi_{i-1}}$, $\bar{\Psi_{i-1}}$, $\bar{\Phi_{i-1}'}$,
  $\bar{\Psi_{i-1}'}$ do not contain instances of $\bar{x_i}$, we conclude
  \begin{align*}
    &\text{from~\eqref{eq:fourbig1}:\qquad} \gamma_{12}=0 \quad\text{
      and }\quad \gamma_{11} \bar{\Phi_{i-1}} =
      \bar{\Phi_{i-1}'}\delta_{11}\,;\\
    &\text{from~\eqref{eq:fourbig2}:\qquad} \delta_{12}=0 \quad\text{
      and }\quad \gamma_{22} \bar{\Psi_{i-1}} =
      \bar{\Psi_{i-1}'}\delta_{22}\,;\\
    &\text{from~\eqref{eq:fourbig3}:\qquad} \gamma_{22}=\delta_{11}\,;
    \text{ and}\\
    &\text{from~\eqref{eq:fourbig4}:\qquad} \delta_{22}=\gamma_{11}\,.
  \end{align*}
  Thus the pair $\gamma_{11}$, $\delta_{11}$
  satisfy~\eqref{eq:barinduct}, so by the induction hypothesis, they
  satisfy~(\ref{item:induct1}) and~(\ref{item:induct2}).  Since $C$ and
  $\gamma_{11}$ share the same $(1,1)$ $m\times m$-block (and ditto
  for $D$ and $\delta_{11}$), the inductive proof is complete.
  
  For part (\ref{item:induct3}), we consider the $(m \times m)$ block
  in position $(2^i,2^i-2^{j-1})$ on either side of the equation $C
  \bar{\Phi_i} = \bar{\Phi_i'}D$.  We get that
  \[
  C_{2^i, 2^i} (\bar{x_j} \id_m - A_j \bar{z}) = (\bar{x_j} \id_m -
  A-_j \bar{z}) D_{2^i-2^{j-1},2^i-2^{j-1}}\,. 
  \]
  Examining the $\bar{z}$-strand yields the desired equality.
\end{proof}

\begin{prop}
  \label{prop:manysmallblocks}
  Let $(S,T) \colon (\Phi_n,\Psi_n) \to
  (\Phi_n',\Psi_n')$ be a homomorphism of matrix factorizations.  Then
  $S\{1\}$ and $T\{1\}$ are $(m\times m)$-block lower triangular of
  the form
  \begin{equation*}\tag{$*$}\label{eq:basicform}
  S\{1\} =   
  \left[\begin{matrix} U \\ &U&&\text{\huge{0}} \\ &&U \\ &&&\ddots \\
      &\text{\Huge{*}}&&&U 
    \end{matrix}\right]\,,\qquad
  T\{1\} = 
  \left[\begin{matrix} U \\ &U&&\text{\huge{0}} \\ &&U \\ &&&\ddots \\
      &\text{\Huge{*}}&&&U 
    \end{matrix}\right]\,,
  \end{equation*}
  where $U A_i = A'_i U$ for $i = 1,\ldots,n$.
\end{prop}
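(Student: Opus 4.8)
The plan is to reduce the two defining equations of the morphism modulo $\m^{2}=(x_{1},\dots,x_{n},z)^{2}$, feed the outcome into Lemma~\ref{lem:fourbigblocks}, and then spend one extra degree of work to see that the two resulting corner blocks coincide.

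First I would reduce $S\Phi_{n}=\Phi'_{n}T$ and $T\Psi_{n}=\Psi'_{n}S$ modulo $\m^{2}$. Every entry of $\Phi_{n},\Psi_{n},\Phi'_{n},\Psi'_{n}$ has order at least $1$, so these matrices have zero $1$-strand and $\bar{\Phi_{n}},\bar{\Psi_{n}},\dots$ are exactly their degree-one parts; hence $S\Phi_{n}\equiv S\{1\}\,\bar{\Phi_{n}}$ and $\Phi'_{n}T\equiv\bar{\Phi'_{n}}\,T\{1\}$ modulo $\m^{2}$, and likewise for the $\Psi$-equation. Thus $C:=S\{1\}$ and $D:=T\{1\}$ satisfy the hypothesis~\eqref{eq:barinduct} of Lemma~\ref{lem:fourbigblocks} with $i=n$, and the Lemma gives immediately that $C$ and $D$ are $(m\times m)$-block lower triangular, that every diagonal block of $C$ or of $D$ lies in $\{C_{11},D_{11}\}$, and that $C_{2^{n},2^{n}}A_{j}=A'_{j}D_{2^{n}-2^{j-1},2^{n}-2^{j-1}}$ for $j=1,\dots,n$. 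So the proposition reduces to the single equality $C_{11}=D_{11}$: granting it, the common value $U:=C_{11}$ is simultaneously every diagonal block of $C$ and of $D$, and the relations of Lemma~\ref{lem:fourbigblocks}(\ref{item:induct3}) become $UA_{j}=A'_{j}U$.

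To get $C_{11}=D_{11}$ I would examine the $(1,1)$-block (of size $m$) of the equation $S\Phi_{n}=\Phi'_{n}T$ one degree deeper. Here the shape of the iterated construction is decisive: the $(1,1)$-block of $\Phi_{n}$ and of $\Phi'_{n}$ is $z^{2}\id_{m}$ (reflecting the innermost factor $\phi_{0}=[z^{2}]$); every other entry of the first block-column of $\Phi_{n}$ is $0$ or some $x_{j}\id_{m}-A_{j}z$, of order exactly $1$; and every other entry of the first block-row of $\Phi'_{n}$ is $0$ or some $-G'_{j}$, of order at least $3$ --- this last point being exactly where the hypothesis that $f$ has order at least $4$ is used, since it forces the $g_{j}$ of Lemma~\ref{lem:A1} to have order at least $3$. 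Combining these with the block-lower-triangularity of $C$ just obtained --- which makes the off-diagonal entries of the first block-row of $S$ have order at least $1$ --- the degree-two homogeneous part of the $(1,1)$-block equation becomes
\[
z^{2}C_{11}+\sum_{k}(S_{1k})_{1}\,(x_{j(k)}\id_{m}-A_{j(k)}z)=z^{2}D_{11},
\]
the sum running over the block-columns $k>1$ at which $\Phi_{n}$ has a nonzero first-column entry $x_{j(k)}\id_{m}-A_{j(k)}z$, the indices $j(k)$ being pairwise distinct.

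Finally I would read off monomial strands of this degree-two identity. The pure-$x$ strands force the relevant linear coefficients of $S$ to be antisymmetric in a suitable indexing (writing $k_{a}$ for the column with $j(k_{a})=a$ and $M_{ab}:=(S_{1k_{a}})\{x_{b}\}$, one gets $M_{aa}=0$ and $M_{ab}=-M_{ba}$); the mixed $x_{i}z$ strands solve for the $z$-coefficients, $(S_{1k_{b}})\{z\}=\sum_{a}M_{ab}A_{a}$; and the $z^{2}$-strand then yields $D_{11}-C_{11}=-\sum_{a,b}M_{ab}A_{a}A_{b}$, which vanishes after symmetrization because $M$ is antisymmetric while the $A_{i}$ commute. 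Hence $C_{11}=D_{11}$, and the proof is complete. The main obstacle is this last stretch: recognizing that the missing information sits precisely in the degree-two part of the $(1,1)$-corner equation, arranging the order bounds so that they isolate the $z^{2}\id_{m}$ corner and strip the first block-row of $S$ down to order $1$, and noticing that the residual obstruction is an antisymmetric quadratic expression in the $A_{i}$ --- hence killed exactly by their commutativity, which is also the only feature of the $A_{i}$ used beyond what Lemma~\ref{lem:fourbigblocks} already provides.
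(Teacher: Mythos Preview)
Your proposal is correct and follows essentially the same route as the paper. The only difference is the order of the two steps: the paper first carries out the degree-two monomial-strand analysis of the $(1,1)$-block of $S\Phi_n=\Phi_n'T$ to obtain $S_{11}\{1\}=T_{11}\{1\}$, and only then invokes Lemma~\ref{lem:fourbigblocks}; you invoke the Lemma first and then do the same $(1,1)$-block computation. The strand equations you extract (antisymmetry $M_{ab}=-M_{ba}$, the $x_iz$-strand solving for the $z$-coefficients, and the vanishing of $\sum_{a,b}M_{ab}A_aA_b$ by commutativity) are exactly equations~\eqref{eq:manysmall1}--\eqref{eq:manysmall4} of the paper, so the substance is identical. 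Your appeal to block-lower-triangularity to force $S_{1k}\{1\}=0$ for $k>1$ is harmless but unnecessary: the paper simply reads off the degree-two part of the exact $(1,1)$-block identity, where those constant terms do not contribute anyway.
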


\begin{proof}
  We first show that $S_{11}\{1\} = T_{11}\{1\}$, where $S_{ij}$ and
  $T_{ij}$ denote the $(m \times m)$ blocks of $S$ and $T$,
  respectively, in the $(i,j)^\text{th}$ position.  For this, we
  consider the $(m\times m)$ block in position $(1,1)$ on either side
  of the equation $S \Phi_n = \Phi_n' T$.  We get that
  \[
  S_{11} z^2 + \sum_{i=1}^n S_{1, 2^{i-1}+1} (x_i \id_m - A_i z) = z^2
  T_{11} + \sum_{i=1}^n T_{2^{i-1}+1,1} G_i'\,,
  \]
  where $G_i'$ are the matrices resulting from ``inflating'' the power
  series $g_i'$.  Since the $g_i'$ have order at least $3$, each entry
  of $G_i'$ also has order at least $3$, and so the quadratic strands
  give the following equations:
  \begin{align}
    \{z^2\}\colon& \qquad S_{11}\{1\} - \sum_{i=1}^n
        S_{1,2^{i-1}+1}\{z\}A_i = T_{11}\{1\} \label{eq:manysmall1}\\
    \{x_i^2\}\colon& \qquad S_{1,2^{i-1}+1} \{x_i\} = 0 \label{eq:manysmall2}\\
    \{x_i z\}\colon& \qquad S_{1,2^{i-1}+1} \{z\} - \sum_{j=1}^n
        S_{1,2^{j-1}+1} \{x_i\} A_j = 0 \label{eq:manysmall3}\\
    \{x_i x_j\}\colon& \qquad S_{1,2^{j-1}+1} \{x_i\} +
    S_{1,2^{i-1}+1}\{x_j\}=0 \label{eq:manysmall4}
  \end{align}
  Starting from equation~\eqref{eq:manysmall1}, we have
  \[
  \begin{split}
    S_{11}\{1\} 
    &= T_{11}\{1\} + \sum_{i=1}^n S_{1,2^{i-1}+1}\{z\}A_i \\
    &= T_{11}\{1\} + \sum_{i=1}^n \left( \sum_{j=1}^n
        S_{1,2^{j-1}+1} \{x_i\} A_j A_i\right) \\
    &= T_{11}\{1\}\,,
  \end{split}
  \]
  the last equality following from
  equations~\eqref{eq:manysmall2},~\eqref{eq:manysmall4}, and the
  commutativity of the $A_i$.  
  
  The proof is completed by Lemma~\ref{lem:fourbigblocks} above. 
\end{proof}

Let $M$ be a $k[a_1,\ldots,a_n]$-module of dimension $m$ over $k$.
After choosing a $k$-basis for $M$, the action of each $a_i$ on $M$
can be expressed as multiplication by an $(m \times m)$ matrix $A_i$
over $k$.  (Note that the $A_i$'s must be pairwise commutative.)  We
may thus identify $M$ with the linear representation $L\colon
k[a_1,\ldots,a_n] \to \Mat_m(k)$, where $a_i \mapsto A_i$ for
$i=1,\ldots,n$.

A homomorphism from a linear representation $L(A_1,\ldots,A_n)$ to
another $L(A'_1,\ldots,A'_n)$ is defined by a matrix $U$ such that $U
A_i = A'_i U$ for $i = 1,\ldots,n$.  Representations are thus
isomorphic if this matrix $U$ is invertible.  A representation
$L(A_1,\ldots,A_n)$ is decomposable if it has a non-trivial idempotent
endomorphism, that is, there exists a matrix $U$ such that $U A_i =
A_i U$ for $i = 1,\ldots,n$, $U^2 = U$ and $U \neq 0,\ \id$.

\begin{theorem}\label{thm:main}
  Let $k$ be an infinite field, let $S = k[\![x_1, \dots, x_n,z]\!]$,
  and let $f \in S$ be a non-zero element of order at least $4$.  Set
  $R = S/(f)$.  Then the functor $F$ from finite-length $k[a_1, \dots,
  a_n]$-modules to MCM $R$-modules, sending a given linear
  representation $L(A_1,\ldots,A_n)$ to the inflated matrix
  factorization $(\Phi(A_1,\ldots,A_n),\Psi(A_1,\ldots,A_n))$, is a
  representation embedding.
  
  In particular, if $n \geq 2$ then $R$ has wild Cohen-Macaulay type.
\end{theorem}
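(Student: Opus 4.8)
The plan is to verify that $F$ has the three defining properties of a representation embedding --- exactness, preservation of indecomposability, and preservation of non-isomorphism --- and then to invoke Proposition~\ref{prop:polyring}. The genuine content, namely the rigidity of homomorphisms of inflated matrix factorizations, is already contained in Proposition~\ref{prop:manysmallblocks} (which rests on Lemma~\ref{lem:fourbigblocks}); what remains is assembly. First I would check that $F$ really is a functor into $\operatorname{MCM}(R)$: since $f$ has order at least $4$ it is a non-zero non-unit, so each inflated matrix factorization $(\Phi(A_1,\dots,A_n),\Psi(A_1,\dots,A_n))$ of $f$ has injective first map and MCM cokernel; moreover all of its entries lie in $\m = (x_1,\dots,x_n,z)$ --- they are $z^2\id$, the inflations of $g_i$ and $h$, and $x_i\id - A_i z$, all of positive order in $x_1,\dots,x_n,z$ --- so the matrix factorization is reduced, and by the Eisenbud correspondence isomorphism (resp.\ indecomposability) of the associated $R$-module coincides with isomorphism (resp.\ indecomposability) of the matrix factorization. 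On morphisms, a homomorphism $U$ of linear representations, i.e.\ a matrix with $UA_i = A_i'U$ for all $i$, induces $(\diag(U,\dots,U),\diag(U,\dots,U))$ ($2^n$ diagonal blocks): every entry of $\Phi(A_1,\dots,A_n)$ is a fixed power series in $x_1,\dots,x_n,z$ with the commuting matrices $A_1,\dots,A_n$ substituted for the parameters, and $U$ intertwines each such expression with the analogous one in the $A_i'$, since it intertwines each $A_i$ with $A_i'$ and commutes with the scalar matrices $x_j\id$, $z\id$, $\alpha\id$. Functoriality and additivity follow at once.

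Next I would treat exactness. For a short exact sequence $0\to M'\to M\to M''\to 0$ of finite-length $k[a_1,\dots,a_n]$-modules, choose a $k$-basis of $M$ adapted to the submodule $M'$, so that each $A_i$ is block lower triangular with diagonal blocks the action $A_i'$ on $M'$ and the action $A_i''$ on $M''$. Because inflation substitutes these matrices, together with the scalar matrices, into fixed power series, and block lower triangular matrices of a fixed shape are closed under the operations involved, $\Phi(A_1,\dots,A_n)$ becomes --- after a permutation of the $m2^n$ basis vectors --- block lower triangular with diagonal blocks $\Phi(A_1',\dots,A_n')$ and $\Phi(A_1'',\dots,A_n'')$, and likewise for $\Psi$. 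Hence $F(M)$ is an extension of $F(M'')$ by $F(M')$ as matrix factorizations, and passing to cokernels gives a short exact sequence $0\to F(M')\to F(M)\to F(M'')\to 0$ of $R$-modules. So $F$ is exact.

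Then I would deduce preservation of non-isomorphism and of indecomposability from Proposition~\ref{prop:manysmallblocks}, using the elementary fact that the $1$-strand $(-)\{1\}$ is reduction modulo $\m$, hence a ring homomorphism on matrices. If $(S,T)\colon F(M)\to F(M')$ is an isomorphism of matrix factorizations, then $S$ is invertible over the power series ring, so $S\{1\}$ is invertible over $k$; by Proposition~\ref{prop:manysmallblocks} it is block lower triangular with constant diagonal block $U$ satisfying $UA_i = A_i'U$, and invertibility forces $U$ invertible, so $U\colon M\to M'$ is an isomorphism of linear representations --- thus $F$ reflects isomorphism. If instead $(S,T)$ is a non-trivial idempotent endomorphism of $F(M)$, then $S^2 = S$, so $S\{1\}^2 = S\{1\}$, and writing $U$ for the constant diagonal block of $S\{1\}$ we get $U^2 = U$. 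If $U = 0$ then $S\{1\}$ is strictly block lower triangular, hence nilpotent as well as idempotent, hence $0$; then all entries of $S$ lie in $\m$, and since $S = S^k$ for every $k$, all entries lie in $\m^k$ for all $k$, so $S = 0$ by the Krull intersection theorem, and likewise $T = 0$ (its $1$-strand has the same diagonal block $U = 0$), contradicting $(S,T)\ne 0$. If $U = \id$, the same argument applied to the idempotent $(\id - S,\id - T)$ gives $(S,T) = \id$. Hence $U\ne 0,\id$, with $U^2 = U$ and $UA_i = A_iU$, so $M$ is decomposable; contrapositively $F$ preserves indecomposability.

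Combining these, $F$ is a representation embedding, and for $n\ge 2$ Proposition~\ref{prop:polyring} with $Q = k[a_1,\dots,a_n]$ and $\calc = \operatorname{MCM}(R)$ shows $\operatorname{MCM}(R)$ is wild, i.e.\ $R$ has wild Cohen--Macaulay type. The main obstacle is, strictly speaking, already behind us: it is Proposition~\ref{prop:manysmallblocks}, which is reduced through Lemma~\ref{lem:fourbigblocks} to a careful analysis of the quadratic and $\bar z$-strands of the inflated Knörrer matrices. Within the assembly above, the only point that needs more than bookkeeping is the exclusion of the degenerate diagonal blocks $U = 0$ and $U = \id$ in the indecomposability step --- that is, ruling out a non-trivial idempotent concentrated strictly below the block diagonal --- which the Krull intersection argument settles.
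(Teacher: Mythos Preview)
Your proposal is correct and follows essentially the same approach as the paper: define $F$ on morphisms via the block-diagonal matrix $\tilde U$, appeal to Proposition~\ref{prop:manysmallblocks} for both the reflection of isomorphism and the reflection of decomposability, and finish with Proposition~\ref{prop:polyring}. You supply more detail than the paper in a few places --- checking that the inflated factorization is reduced, spelling out exactness via an adapted basis, and justifying (via idempotent--nilpotent plus Krull intersection) the implications ``$U=0\Rightarrow S=T=0$'' and ``$U=\id\Rightarrow S=T=\id$'' that the paper simply asserts --- but these are elaborations of the same argument rather than a different route.
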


\begin{proof}
  The functor $F$ is defined as follows on homomorphisms of
  linear representations $U \colon L(A_1, \dots, A_n) \to L(A_1',
  \dots, A_n')$.  If the $A_i$ are $\ell \times \ell$ matrices and the
  $A_i'$ are $m \times m$, then $U$ is an $m \times \ell$ matrix over
  $k$, and is sent to the block-diagonal $(m2^n \times \ell 2^n)$
  matrix $\tilde U$ with $U$ down the diagonal.  Since $U$ satisfies
  the relations $U A_i = A_i' U$, and the blocks of $\Phi(A_1, \dots,
  A_n)$ and $\Psi(A_1,\dots, A_n)$ are power series in the matrices
  $A_i$ with coefficients in $S$, we get $\tilde U \Phi = \Phi' \tilde
  U$ and $\tilde U \Psi = \Psi'\tilde U$.
  Now it is clear that $F$ is an exact functor.

  Suppose there is an isomorphism between matrix factorizations
  \[
  (S,T)\colon (\Phi(A_1,\ldots,A_n),\ \Psi(A_1,\ldots,A_n)) \rightarrow
  (\Phi(A'_1,\ldots,A'_n),\ \Psi(A'_1,\ldots,A'_n))\,.
  \] 
  By Proposition~\ref{prop:manysmallblocks}, $S\{1\}$ and
  $T\{1\}$ are of the form in~\eqref{eq:basicform}, in which  $U$
  defines a homomorphism from $L(A_1,\ldots,A_n)$ to
  $L(A'_1,\ldots,A'_n)$.  Since $S$ is invertible, so is $U$.
  Thus the representations are isomorphic.

  Suppose the matrix factorization
  $(\Phi(A_1,\ldots,A_n),\ \Psi(A_1,\ldots,A_n))$ is decomposable, that
  is, it has an endomorphism $(S,T)$ such that $S^2 =
  S$, $T^2 = T$ and $(S,T) \neq (0,0),
  (\id,\id)$.  Again, by Proposition~\ref{prop:manysmallblocks},
  $S\{1\}$ and $T\{1\}$ are of the form
  in~\eqref{eq:basicform}, in which the matrix $U$ now defines an
  idempotent endomorphism of the representation $L(A_1,\ldots,A_n)$.
  Since $S$ and $T$ are idempotent matrices, if $U = 0$, then
  $S=T = 0$.  Similarly, if $U = \id$, then
  $S=T = \id$.  Thus the representation $L(A_1,\ldots,A_n)$
  must be decomposable.

  The final sentence follows from Proposition~\ref{prop:polyring}.
\end{proof}

\renewcommand{\bibname}{References}


\newcommand{\arxiv}[2][AC]{\mbox{\href{http://arxiv.org/abs/#2}{\textsf{arXiv:#2[math.#1]}}}}
\newcommand{\oldarxiv}[2][AC]{\mbox{\href{http://arxiv.org/abs/math/#2}{\textsf{arXiv:math/#2[math.#1]}}}}
\providecommand{\MR}[1]{\mbox{\href{http://www.ams.org/mathscinet-getitem?mr=#1}{#1}}}
 \renewcommand{\MR}[1]{{\href{http://www.ams.org/mathscinet-getitem?mr=#1}{#1}}}
\providecommand{\bysame}{\leavevmode\hbox to3em{\hrulefill}\thinspace}
\providecommand{\MR}{\relax\ifhmode\unskip\space\fi MR}
\providecommand{\MRhref}[2]{%
 \href{http://www.ams.org/mathscinet-getitem?mr=#1}{#2}
}

\def\cprime{$'$} \def\cprime{$'$} \def\cprime{$'$}
\providecommand{\bysame}{\leavevmode\hbox to3em{\hrulefill}\thinspace}
\providecommand{\MR}{\relax\ifhmode\unskip\space\fi MR }
\providecommand{\MRhref}[2]{%
  \href{http://www.ams.org/mathscinet-getitem?mr=#1}{#2}
}
\providecommand{\href}[2]{#2}

\end{document}